\newcommand{\N}{\mathbb{N}}
\newcommand{\R}{\mathbb{R}}
\newcommand{\C}{\mathbb{C}}
\newcommand{\T}{\mathbb{T}}
\newcommand{\D}{\mathbb{D}}
\newtheorem{theorem}{Theorem}
\newtheorem{proposition}[theorem]{Proposition}
\newtheorem{corollary}[theorem]{Corollary}
\theoremstyle{definition}
\newtheorem{remark}[theorem]{Remark}
\newtheorem{definition}[theorem]{Definition}
\newtheorem{conjecture}{Conjecture}
\DeclareMathOperator{\coker}{coker}
\begin{document}

\title{Essential positivity}

\author{A. Per\"al\"a}
\address{Department of Mathematics and Mathematical Statistics, Ume{\aa} University, 90187 Ume{\aa}, SWEDEN}
\email{antti.perala@umu.se}

\author{J. A. Virtanen}
\address{University of Reading, UK}
\email{j.a.virtanen@reading.ac.uk}

\subjclass[2020]{Primary 47B02, 47B65, 47B35}

\keywords{Essential positivity, Berezin transform, Toeplitz operators, Bergman space}

\begin{abstract}
We define essentially positive operators on Hilbert space as a class of self-adjoint operators whose essential spectra is contained in the nonnegative real numbers and describe their basic properties. Using Toeplitz operators and the Berezin transform, we further illustrate the notion of essential positivity in the Hardy space and the Bergman space.
\end{abstract}

\maketitle

\section{Essentially positive operators on Hilbert spaces}

Given a Hilbert space $H$ over the field of complex numbers $\C$, denote by $L(H)$ the algebra of bounded operators on $H$ and by $K(H)$ the two-sided ideal of compact operators in $L(H)$. The spectrum $\sigma(T)$ of an operator $T$ in $L(H)$ is defined by
$$
	\sigma(T)=\{\lambda \in \mathbb{C}:T-\lambda\ \text{\rm is not invertible}\}.
$$
Further, we write 
$$
	\ker T=\{x \in H:Tx=0\}\quad{\rm and}
	\quad \coker T=H/T(H)
$$ 
for the kernel and cokernel of $T$, respectively, and call an operator $T \in L(H)$ Fredholm and write $T\in \Phi(H)$ if both $\ker T$ and $\operatorname{coker}T$ are finite-dimensional. In this case, $T$ has a closed range and a well-defined Fredholm index:
$$
	\operatorname{ind}T=\operatorname{dim}\ker T-\operatorname{dim}\operatorname{coker}T.
$$
It is well known that the property of $T$ being a Fredholm operator is equivalent to invertibility of $[T]=T+K(H)$ in the Calkin algebra $L(H)/K(H)$. The essential spectrum $\sigma_{\rm ess}(T)$ of $T$ is defined by
$$
	\sigma_{\rm ess}(T)=\{\lambda \in \mathbb{C}:T-\lambda \notin \Phi(H)\}.
$$
This concept is sometimes called the Wolf (or Calkin or Fredholm) essential spectrum of $T$, whereas 
$$\bigcap_{K\in K(X)}\sigma(T+K)$$ is called the Weyl essential spectrum. Notice that $\sigma_{\rm ess} (T) = \sigma_{\rm ess} (T+K) \subset \sigma(T+K)$ for all $K\in K(H)$, so the Wolf essential spectrum is always contained in the Weyl essential spectrum. See \cite{FSW} for further details.

We say that a self-adjoint operator $T$ on the Hilbert space $H$ is positive and write $T\ge 0$ if
\begin{equation}\label{e:sa1}
	\sigma(T) \subset [0,\infty).
\end{equation}
Notice that there are operators that satisfy \eqref{e:sa1} but are not self-adjoint. Further, it is well known that $T\geq 0$ if and only if 
\begin{equation}\label{e:sa2}
	\langle Tx,x\rangle \geq 0\quad \text{\rm for all}\ x \in H
\end{equation}
which is often used as the definition of positive operators (see, e.g.,~\cite{Douglas}). Recall also that $T$ is a self-adjoint operator on $H$ if and only if $\langle Tx, x\rangle$ is real for all $x\in H$, and hence, in particular, every operator satisfying \eqref{e:sa2} is self-adjoint. We can now present the main concept of this paper.

\begin{definition}
A self-adjoint operator $T \in L(H)$ is said to be essentially positive if 
$$
	\sigma_{\rm ess}(T)\subset [0,\infty).
$$
In this case, we write $T \gtrsim 0$.
\end{definition}

Using the quotient algebra $L(H)/K(H)$, known as the \emph{Calkin algebra}, we can easily reduce the previous definition to positivity of elements in a $C^*$-algebra (i.e., of self-adjoint elements whose spectra are contained in $[0,\infty)$). More precisely, for an element $a$ in a $C^*$-algebra $\mathcal{A}$ with identity, we define the \emph{spectrum} of $a$ by $\sigma(a) = \{ \lambda \in \C : a-\lambda\ \textrm{is not invertible}\}$, which extends the definition of the spectrum of an operator above. Further, we say that an element $a\in \mathcal{A}$ is \emph{positive} if $a^*=a$ and $\sigma(a)\subset [0,\infty)$. For the fact that $L(H)/K(H)$ is a $C^*$-algebra, see Theorem 5.38 of \cite{Douglas}.

\begin{proposition}\label{calkin}
Let $H$ be a Hilbert space and $T\in L(H)$ be self-adjoint. Then $T$ is essentially positive if and only if $T+K(H)$ is positive in $L(H)/K(H)$.
\end{proposition}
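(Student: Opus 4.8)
The plan is to reduce everything to the standard characterization of positive elements in a unital $C^*$-algebra, applied to the Calkin algebra $L(H)/K(H)$. Recall that $L(H)/K(H)$, equipped with the quotient norm, is a unital $C^*$-algebra and that the canonical quotient map $\pi\colon L(H)\to L(H)/K(H)$, $\pi(S)=[S]$, is a unital $*$-homomorphism; in particular $[S]^*=[S^*]$ for every $S\in L(H)$.

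First I would observe that $[T]$ is a self-adjoint element of $L(H)/K(H)$: since $T=T^*$ by hypothesis, $[T]^*=[T^*]=[T]$. Next, recall that an element $a$ of a unital $C^*$-algebra is positive if and only if $a=a^*$ and its spectrum (computed in that algebra) is contained in $[0,\infty)$. Taking $a=[T]$ and using that $[T]=[T]^*$, this says that $[T]\ge 0$ in $L(H)/K(H)$ if and only if the spectrum of $[T]$ in $L(H)/K(H)$ is contained in $[0,\infty)$.

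The remaining ingredient is the identification of this spectrum with $\sigma_{\rm ess}(T)$. For $\lambda\in\C$ one has $[T]-\lambda=[T-\lambda]$, and, as recalled in the excerpt, $T-\lambda\in\Phi(H)$ if and only if $[T-\lambda]$ is invertible in the Calkin algebra; hence $\lambda$ lies in the spectrum of $[T]$ exactly when $\lambda\in\sigma_{\rm ess}(T)$. Chaining the equivalences (and using once more that $T$ is self-adjoint, so that $T\gtrsim 0$ means precisely $\sigma_{\rm ess}(T)\subset[0,\infty)$) yields the claim.

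No serious obstacle arises. The one subtlety worth flagging is that positivity in a $C^*$-algebra is \emph{not} determined by the spectrum alone: an element whose spectrum lies in $[0,\infty)$ is positive only when it is additionally self-adjoint — exactly the phenomenon noted for operators right after \eqref{e:sa1}. This is precisely why the hypothesis $T=T^*$ is needed (to guarantee that $[T]$ is self-adjoint), and without it the stated equivalence would fail.
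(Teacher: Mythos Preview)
Your proof is correct and follows the same approach as the paper, which dispatches the proposition in a single line by noting that $\sigma(T+K(H))=\sigma_{\rm ess}(T)$. You have simply unpacked the standard $C^*$-algebra facts (self-adjointness of $[T]$ and the definition of positivity in terms of spectrum) that the paper leaves implicit.
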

\begin{proof}
It suffices to notice that $\sigma(T+K(H)) = \sigma_{\rm ess} (T)$.
\end{proof}

Recall that a net $(x_\alpha)$ in $H$ is said to converge weakly to $x\in H$ if
$$
	\lim_{\alpha} \langle x_\alpha,y\rangle = \langle x,y\rangle
$$
for all $y\in H$. Bounded operators preserve weak convergence, and compact operators turn it into strong convergence; that is, if $x_\alpha \to x$ weakly in $H$ and $K\in K(H)$, then $Kx_\alpha \to Kx$.

The following result gives a characterization of essentially positive operators on Hilbert spaces.

\begin{theorem}\label{main1}
Let $H$ be a Hilbert space and $T \in L(H)$ be self-adjoint. The following are equivalent:
\begin{itemize}
\item[(a)] $T$ is essentially positive;
\item[(b)] $T+K$ is positive for some $K\in K(H)$;
\item[(c)] $\liminf_{\alpha} \langle Tx_\alpha,x_\alpha\rangle \geq 0$ whenever $x_\alpha$ are unit vectors and $x_\alpha \to 0$ weakly.
\end{itemize}
\end{theorem}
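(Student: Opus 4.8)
The plan is to establish the cyclic chain $(a)\Rightarrow(b)\Rightarrow(c)\Rightarrow(a)$. For $(a)\Rightarrow(b)$ I would invoke the spectral theorem for the bounded self-adjoint operator $T$. Letting $E$ denote its spectral measure, write $T=T_+-T_-$ with $T_+=\int_{[0,\infty)}\lambda\,dE(\lambda)\ge 0$ and $T_-=-\int_{(-\infty,0)}\lambda\,dE(\lambda)\ge 0$. Essential positivity says that $\sigma(T)\cap(-\infty,0)$ lies outside the essential spectrum, and for a self-adjoint operator the part of the spectrum outside the essential spectrum consists of isolated eigenvalues of finite multiplicity; hence $\sigma(T)\cap(-\infty,-\varepsilon]$ is finite for every $\varepsilon>0$, so $T_-$ is the norm limit of the finite-rank operators $-\int_{(-\infty,-\varepsilon)}\lambda\,dE(\lambda)$ and is therefore compact. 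Taking $K=T_-$ gives $T+K=T_+\ge 0$ with $K\in K(H)$, which is (b).

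The implication $(b)\Rightarrow(c)$ is soft. Suppose $T+K\ge 0$ with $K\in K(H)$, and let $(x_n)$ be unit vectors converging weakly to $0$. Then $\langle Tx_n,x_n\rangle=\langle (T+K)x_n,x_n\rangle-\langle Kx_n,x_n\rangle\ge -\langle Kx_n,x_n\rangle$, and since $K$ is compact we have $Kx_n\to 0$ in norm, whence $\langle Kx_n,x_n\rangle\to 0$ by the Cauchy--Schwarz inequality; the desired inequality $\liminf_{n\to\infty}\langle Tx_n,x_n\rangle\ge 0$ follows.

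For $(c)\Rightarrow(a)$ I would argue by contraposition. If (a) fails, there is $\lambda\in\sigma_{\rm ess}(T)$ with $\lambda<0$, and Weyl's criterion for the essential spectrum of a self-adjoint operator produces an orthonormal sequence $(x_n)$ with $(T-\lambda)x_n\to 0$. Every orthonormal sequence converges weakly to $0$ (by Bessel's inequality), and $\langle Tx_n,x_n\rangle=\lambda+\langle (T-\lambda)x_n,x_n\rangle\to\lambda<0$ since $\|(T-\lambda)x_n\|\to 0$ and $\|x_n\|=1$; thus $\liminf_{n\to\infty}\langle Tx_n,x_n\rangle=\lambda<0$, contradicting (c).

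The only real work is in $(a)\Rightarrow(b)$: one must use the structure of the spectrum of a self-adjoint operator near its essential spectrum and then verify that the correction operator $T_-$ assembled from the negative spectral data is genuinely compact. The remaining implications rely on nothing beyond the defining property of compact operators and Weyl's criterion; alternatively one could shorten the cycle by inserting the trivial observation $(b)\Rightarrow(a)$ via $\sigma_{\rm ess}(T)=\sigma_{\rm ess}(T+K)\subset\sigma(T+K)\subset[0,\infty)$.
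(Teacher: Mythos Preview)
Your cycle $(a)\Rightarrow(b)\Rightarrow(c)\Rightarrow(a)$ is correct, and the middle implication matches the paper verbatim. The two endpoints, however, are argued by different means than in the paper, and both of your arguments are more elementary. For $(a)\Rightarrow(b)$ the paper passes to the Calkin algebra: since $T+K(H)$ is a positive element of the $C^*$-algebra $L(H)/K(H)$, it factors as $S^*S+K(H)$, and unwinding gives a compact $K$ with $T+K=S^*S\ge 0$. Your spectral-theoretic route---isolating $T_-$ and showing it is compact because the negative spectrum consists of finite-multiplicity eigenvalues accumulating only at $0$---avoids the Calkin machinery entirely and even produces an explicit self-adjoint $K$. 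For $(c)\Rightarrow(a)$ the paper quotes the essential numerical range: it shows $\sigma_{\rm ess}(T)\subset W_{\rm ess}(T)$ and then invokes a result of Fillmore--Stampfli--Williams that every point of $W_{\rm ess}(T)$ is a limit $\lim\langle Tx_n,x_n\rangle$ along unit vectors $x_n\to 0$ weakly. Your use of Weyl's criterion (an orthonormal singular sequence for each $\lambda\in\sigma_{\rm ess}(T)$) is a cleaner and more self-contained way to reach the same conclusion in the self-adjoint case; the paper's detour through $W_{\rm ess}$ is heavier but would generalize beyond self-adjoint operators.
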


\begin{proof} We assume that $T$ is self-adjoint, and prove $(a)\Rightarrow (b) \Rightarrow (c) \Rightarrow (a)$. 

Assume that (a) holds. By Proposition~\ref{calkin}, $T+K(H)$ is positive and hence there is some $S\in L(H)$ such that $T+K(H)=S^*S+K(H)$. Thus, there is a compact operator $K$ on $H$ such that
$$
	T + K = S^*S \ge 0,
$$
that is, (b) holds.

Assume next that (b) holds, and let $(x_\alpha)$ be a net of unit vectors with $x_\alpha\to 0$ weakly. Then, for some $K\in K(H)$, we have
$$
	\liminf_{\alpha} \langle Tx_\alpha,x_\alpha\rangle=\liminf_{\alpha} \left(\langle (T+K)x_\alpha,x_\alpha\rangle-\langle Kx_\alpha,x_\alpha\rangle\right)\geq 0,
$$
that is, (c) holds.

Finally, assume that (c) holds, and let $\lambda \in \sigma_{\rm ess}(T)$. Then $\lambda$ also belongs to the Weyl essential spectrum of $T$. Recall that the numerical range of $T$ is defined by 
$$
	W(T) = \{ \langle Tx,x \rangle : x\in H, \|x\|=1\}
$$
and the essential numerical range $W_{\rm ess}(T)$ can be described as
$$
	W_{\rm ess}(T)=\bigcap_{K \in K(H)}\overline{W(T+K)}
$$
(see, e.g., page 187 of \cite{FSW}). For every compact operator $K$, we have
$$
	\sigma_{\rm ess}(T)=\sigma_{\rm ess}(T+K)\subset \sigma(T+K)
	\subset \overline{W(T+K)}
$$
(see Exercise 4.4 of \cite{Douglas} for the last inclusion), and so $\sigma_{\rm ess}(T) \subset W_{\rm ess}(T)$. By Corollary to Theorem~5.1 of~\cite{FSW}, it follows that 
$$
	\lambda = \lim_{n\to \infty} \langle Tx_n,x_n\rangle
$$
for some sequence $(x_n)$ of unit vectors with $x_n\to 0$ weakly, which implies that $\lambda \in [0,\infty)$.
\end{proof}

We provide one more characterization of essential positivity for operators of the canonical form. 

\begin{proposition}\label{canonical}
Let $\{e_n\}$ be an orthonormal basis in $H$ and suppose that the operator
$$
	Tx = \sum_{n=1}^\infty \lambda_n\langle x,e_n\rangle e_n,\quad x\in H,
$$
is a bounded self-adjoint operator, so that each $\lambda_n\in\R$. Then $T$ is essentially positive if and only if $\liminf_{n\to\infty} \lambda_n \ge 0$.
\end{proposition}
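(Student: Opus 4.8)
The plan is to derive the proposition from Theorem~\ref{main1} rather than computing $\sigma_{\rm ess}(T)$ directly: the implication (b)$\Rightarrow$(a) handles the ``if'' direction and (a)$\Rightarrow$(c), used in contrapositive form, handles the ``only if'' direction. As an aside, one could instead observe that for a diagonal operator the essential spectrum is precisely the set of cluster points of the sequence $(\lambda_n)$, whose infimum is $\liminf_{n\to\infty}\lambda_n$, and read the claim off from the definition of essential positivity; but the route through Theorem~\ref{main1} is shorter given what has already been established.

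First I would treat the ``if'' direction. Assuming $\liminf_{n\to\infty}\lambda_n\ge 0$, set $\mu_n=\max\{-\lambda_n,0\}$ and let $K$ be the diagonal operator $Kx=\sum_{n=1}^\infty\mu_n\langle x,e_n\rangle e_n$. The hypothesis says that for every $\varepsilon>0$ we have $\lambda_n>-\varepsilon$ for all sufficiently large $n$, hence $\mu_n\to 0$ and $K$ is compact, being the norm limit of its finite-rank truncations. Since $T+K$ is the diagonal operator with entries $\lambda_n+\mu_n=\max\{\lambda_n,0\}\ge 0$, we get $\langle (T+K)x,x\rangle=\sum_{n}\max\{\lambda_n,0\}\,|\langle x,e_n\rangle|^2\ge 0$ for all $x\in H$, so $T+K\ge 0$, and Theorem~\ref{main1} then gives $T\gtrsim 0$.

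For the converse I would argue by contradiction: if $T\gtrsim 0$ but $\liminf_{n\to\infty}\lambda_n<0$, choose $\delta>0$ and a strictly increasing sequence of indices $(n_k)$ with $\lambda_{n_k}\le-\delta$ for all $k$. The unit vectors $e_{n_k}$ converge to $0$ weakly, since by Bessel's inequality $\langle e_{n_k},y\rangle\to 0$ for every $y\in H$ as the $n_k$ are distinct; yet $\langle Te_{n_k},e_{n_k}\rangle=\lambda_{n_k}\le-\delta$, so $\liminf_{k\to\infty}\langle Te_{n_k},e_{n_k}\rangle\le-\delta<0$, contradicting part (c) of Theorem~\ref{main1}. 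Hence $\liminf_{n\to\infty}\lambda_n\ge 0$.

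I do not expect a serious obstacle here; the only points deserving a word of care are that the $\liminf$ hypothesis genuinely forces $\mu_n\to 0$, i.e.\ compactness of $K$, and, dually, that one can extract a weakly null sequence of basis vectors witnessing the failure of (c). Both follow immediately once one recalls that the $\liminf$ of a bounded real sequence is its smallest cluster point.
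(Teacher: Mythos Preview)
Your proof is correct and follows essentially the same route as the paper: for the ``if'' direction you build the same compact correction (your $K$ is precisely $-T^-$ in the paper's notation, so $T+K=T^+$), and for the converse both arguments test against the weakly null basis vectors $e_n$, the only cosmetic difference being that the paper invokes part (b) of Theorem~\ref{main1} directly while you use part (c) in contrapositive form.
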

\begin{proof}
Suppose first that $\liminf_{n\to\infty} \lambda_n \ge 0$. Let
$$
	\lambda_n^+ = \max \{\lambda_n, 0\} \quad{\rm and}\quad 
	\lambda_n^- = \lambda_n-\lambda_n^+,
$$
for all $n\in\N$, and define
$$
	T^{\pm} x = \sum_{n=1}^\infty \lambda^{\pm}_n\langle x,e_n\rangle e_n
$$
for $x\in H$. Then $T^+$ is positive because $\lambda^+_n\ge 0$ for all $n$, and $T^-$ is compact because $\lambda^-_n\to 0$. Therefore, $T = T^+ + T^-$ is essentially positive by Theorem~\ref{main1}.

Conversely, suppose that $T+K$ is positive for some compact operator $K$. Notice first that $e_n \to 0$ weakly because $\{e_n\}$ is orthonormal, and hence $Ke_n\to 0$. Therefore, since
$$
	\langle (A+K)e_n,e_n\rangle 
	= \lambda_n + \langle Ke_n,e_n\rangle\ge 0,
$$
we have $\lambda_n \ge - \langle Ke_n, e_n\rangle \to 0$ as $n\to \infty$, which shows that $\liminf_{n\to \infty} \lambda_n \ge 0$.
\end{proof}

In the subsequent sections, we will illustrate essential positivity in the context of the Berezin transform and Toeplitz operators on the Hardy, Bergman and Fock spaces.

\section{The Berezin transform}

Let $\Omega$ be a set and suppose that $H$ is a reproducing kernel Hilbert space on $\Omega$. That is, for each $z\in \Omega$, the point evaluation $E_z : H\to \C$ defined by $E_z(f) = f(z)$ is bounded, and hence there is a unique vector $K_z\in H$ such that for each $f\in H$,
$$
	f(z) = \langle f, K_z\rangle.
$$
We call $K_z$ the reproducing kernel for $H$. The normalized reproducing kernel $k_z$ is defined by $k_z = K_z/\|K_z\|$. Given $T\in L(H)$, the Berezin transform $\tilde T$ is defined by
$$
	\tilde T(z) = \langle Tk_z, k_z\rangle
$$
for $z\in \Omega$. The Berezin transform can be used to obtain information about concrete operators acting on various (analytic) function spaces.

We note a couple of simple but useful properties. If $T$ is positive, then $\tilde T(z) \ge 0$ for all $z\in \Omega$. The following result is a consequence of the Cauchy-Schwarz inequality.

\begin{proposition}
Let $T$ be compact on $H$ and $\Omega\subset \C$ be open. If $k_z\to 0$ weakly as $z\to \partial \Omega$, then $\tilde T(z)\to 0$ as $z\to \partial \Omega$. 
\end{proposition}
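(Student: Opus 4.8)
The plan is to combine the Cauchy--Schwarz inequality with the fact, recalled in Section~1, that a compact operator turns weak convergence into norm convergence. Since $\|k_z\|=1$ for every $z\in\Omega$, Cauchy--Schwarz gives
$$
|\tilde T(z)| = |\langle Tk_z,k_z\rangle| \le \|Tk_z\|\,\|k_z\| = \|Tk_z\|,
$$
so it suffices to prove that $\|Tk_z\|\to 0$ as $z\to\partial\Omega$.

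To establish this I would argue sequentially. Interpreting "$z\to\partial\Omega$" as "$z$ eventually leaves every compact subset of $\Omega$", the claim $\|Tk_z\|\to 0$ is equivalent to $\|Tk_{z_n}\|\to 0$ for every sequence $(z_n)$ in $\Omega$ with this escape property; this reduction to sequences is legitimate because $\Omega$, being an open subset of $\C$, is metrizable and first countable, and $z\mapsto\|Tk_z\|$ is a nonnegative real-valued function. So fix such a sequence $(z_n)$. By hypothesis $k_{z_n}\to 0$ weakly, and since $T$ is compact this forces $Tk_{z_n}\to 0$ in norm, i.e. $\|Tk_{z_n}\|\to 0$. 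The Cauchy--Schwarz bound above then yields $\tilde T(z_n)\to 0$, and since $(z_n)$ was arbitrary, $\tilde T(z)\to 0$ as $z\to\partial\Omega$.

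The only point needing a little care is matching the two boundary-limit statements (that for $k_z$ in the hypothesis and that for $\tilde T$ in the conclusion) and the passage to sequences; once this is pinned down there is no real obstacle. If one wishes to avoid sequences, the same reasoning goes through directly for nets: the bounded net $(k_z)$ is mapped by the compact $T$ into a relatively norm-compact set, and a subnet argument together with the weak--weak continuity of $T$ shows $Tk_z\to 0$ in norm, hence $\tilde T(z)\to 0$.
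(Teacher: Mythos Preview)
Your proof is correct and matches the paper's intended argument: the paper simply states that the result ``is a consequence of the Cauchy--Schwarz inequality'' without giving further details, and your proof supplies exactly this, combining the bound $|\tilde T(z)|\le\|Tk_z\|$ with the fact (recalled in Section~1) that compact operators send weakly null vectors to norm-null vectors. The sequential/net discussion you add is a reasonable bit of hygiene but not something the paper dwells on.
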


\begin{proposition}\label{Berezin1}
Let $T$ be essentially positive on $H$ and $\Omega\subset \C$ be open. If $k_z\to 0$ weakly as $z\to\partial\Omega$, then
$$
	\liminf_{z\to \partial \Omega} \tilde T(z) \geq 0.
$$
\end{proposition}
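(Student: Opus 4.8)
The plan is to reduce the statement to the sequential characterization of essential positivity furnished by Theorem~\ref{main1}, specifically condition (c). I would argue by contradiction. Suppose that $\liminf_{z\to\partial\Omega}\tilde T(z) = m < 0$ (note $m$ is finite since $|\tilde T(z)| = |\langle Tk_z,k_z\rangle|\le \|T\|$). Unwinding the definition of the boundary limit inferior, this produces, for each $n$, a point $z_n\in\Omega$ lying in the $1/n$-neighborhood of $\partial\Omega$ (and, if $\Omega$ is unbounded, with $|z_n|>n$ as well) such that $\tilde T(z_n) < m + 1/n$; in particular $\tilde T(z_n) < m/2 < 0$ for all sufficiently large $n$, and $z_n\to\partial\Omega$.

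Next I would bring in the hypotheses on the reproducing kernels. By construction the normalized reproducing kernels $k_{z_n}$ are unit vectors, and since $z_n\to\partial\Omega$ the assumption gives $k_{z_n}\to 0$ weakly in $H$. Recalling that $\tilde T(z_n) = \langle Tk_{z_n},k_{z_n}\rangle$, condition (c) of Theorem~\ref{main1}, applied to the sequence $(k_{z_n})$, yields
$$
\liminf_{n\to\infty}\tilde T(z_n) = \liminf_{n\to\infty}\langle Tk_{z_n},k_{z_n}\rangle \ge 0.
$$
This contradicts the fact that $\tilde T(z_n) < m/2 < 0$ for all large $n$, and the proof is complete.

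The only point that I expect to require a little care — rather than a genuine obstacle — is the passage from the topological statement $\liminf_{z\to\partial\Omega}\tilde T(z) < 0$ to an honest sequence $z_n\to\partial\Omega$ along which $\tilde T$ stays below a fixed negative constant. This means fixing a neighborhood base of $\partial\Omega$ in $\overline\Omega$ (including neighborhoods of $\infty$ when $\Omega$ is unbounded) so that "$z\to\partial\Omega$" has an unambiguous sequential meaning that matches the hypothesis $k_z\to 0$ weakly. Once this translation is in place, everything else is a direct invocation of Theorem~\ref{main1}.
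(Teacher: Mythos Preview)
Your proof is correct. It differs from the paper's only in packaging: the paper invokes characterization (b) of Theorem~\ref{main1} (the existence of a compact $K$ with $T+K\ge 0$) together with the preceding proposition that $\tilde K(z)\to 0$, and writes directly
\[
\liminf_{z\to\partial\Omega}\tilde T(z)\ge \liminf_{z\to\partial\Omega}(\tilde T+\tilde K)(z)-\lim_{z\to\partial\Omega}|\tilde K(z)|\ge 0.
\]
You instead invoke characterization (c) and argue by contradiction via a sequence. Since (c) was derived from (b) in Theorem~\ref{main1} by exactly the same compactness manoeuvre, your argument is essentially the paper's argument with that step quoted rather than repeated; the paper's route is a line shorter and avoids the sequential bookkeeping you flag at the end, while yours highlights that the result is an immediate corollary of the sequential criterion.
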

\begin{proof}
If $T$ is essentially positive, then there is a compact operator $K$ such that $T+K\ge 0$. Thus,
$$
	\liminf_{z\to\partial\Omega}\tilde T(z) 
	\ge \liminf_{z\to \partial\Omega} (\tilde T+ \tilde K)(z) - \lim_{z\to\partial\Omega} |\tilde K(z)| \ge 0
$$
by the previous proposition.
\end{proof}
In Section~\ref{Bergman}, we will see that the converse of the preceding proposition is not true in general for operators acting on the Bergman space.

\section{Toeplitz operators on the Hardy space}\label{Hardy}

The Hardy space $H^2$ of the unit circle $\T$ is defined by
$$
	H^2 = \{ f\in L^2(\T) : f_k = 0\ {\rm for}\ k<0\},
$$
where $f_k$ denotes the $k$th Fourier coefficient of $f$. Denote by $P$ the orthogonal projection of $L^2(\T)$ onto $H^2$, and define the Toeplitz operator $T_f$ with symbol $f$ by
$$
	T_f g = P(fg)
$$
for $g\in H^2$. A theorem of Brown and Halmos shows that $T_f$ is bounded on $H^2$ if and only if $f\in L^\infty$ (see, e.g.,~\cite{BS2006}). Notice that the adjoint $T^*_f$ is $T_{\bar f}$, so $T_f$ is self-adjoint if and only if $f$ is real almost everywhere.

Using well-known spectral results, it is easy to characterize essential positivity of Toeplitz operators on $H^2$.

\begin{proposition}\label{H2}
Suppose that $f\in L^\infty(\T)$ is real-valued. Then the following conditions are equivalent:
\begin{enumerate}
\item $T_f$ is positive on $H^2$;

\item $T_f$ is essentially positive on $H^2$;

\item $f(t) \in [0,\infty)$ for almost every $t\in\T$.
\end{enumerate}
\end{proposition}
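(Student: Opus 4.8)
The plan is to prove the cycle $(3)\Rightarrow(1)\Rightarrow(2)\Rightarrow(3)$, two of whose implications are essentially free. For $(3)\Rightarrow(1)$, I would use that for every $g\in H^2$,
$$
\langle T_f g,g\rangle=\langle P(fg),g\rangle=\langle fg,g\rangle=\int_\T f\,|g|^2\,dm,
$$
where $m$ denotes normalized arc length on $\T$ (the middle equality uses $g=Pg$ together with $P=P^*$). If $f\ge 0$ a.e., this is nonnegative for all $g$, so $T_f\ge 0$ by \eqref{e:sa2}. The implication $(1)\Rightarrow(2)$ is immediate, since $\sigma_{\rm ess}(T_f)\subset\sigma(T_f)\subset[0,\infty)$.

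So the content lies in $(2)\Rightarrow(3)$, which I would prove via the Berezin transform and Proposition~\ref{Berezin1}. The reproducing kernel of $H^2$ is $K_z(w)=(1-\bar z w)^{-1}$, so $\|K_z\|^2=(1-|z|^2)^{-1}$ and the normalized kernels are $k_z(w)=(1-|z|^2)^{1/2}(1-\bar z w)^{-1}$. These satisfy $k_z\to 0$ weakly as $|z|\to 1$: since $\langle g,k_z\rangle=(1-|z|^2)^{1/2}g(z)\to 0$ as $|z|\to 1$ for every polynomial $g$, and $\|k_z\|=1$, the claim follows by density of the polynomials in $H^2$. Moreover, by the computation above,
$$
\widetilde{T_f}(z)=\langle T_f k_z,k_z\rangle=\int_\T f(t)\,|k_z(t)|^2\,dm(t)=\int_\T f(t)\,\frac{1-|z|^2}{|1-\bar z t|^2}\,dm(t),
$$
which is precisely the Poisson integral $P[f](z)$. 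If $T_f$ is essentially positive, Proposition~\ref{Berezin1} applied with $\Omega=\D$ gives $\liminf_{|z|\to 1}\widetilde{T_f}(z)\ge 0$. Since $f\in L^\infty(\T)\subset L^1(\T)$, Fatou's theorem on radial limits of Poisson integrals yields $P[f](rt)\to f(t)$ as $r\to 1^-$ for a.e.\ $t\in\T$; hence for a.e.\ $t$,
$$
f(t)=\lim_{r\to 1^-}\widetilde{T_f}(rt)\ge\liminf_{|z|\to 1}\widetilde{T_f}(z)\ge 0,
$$
which is $(3)$.

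I expect no genuinely hard step; the points requiring care are the weak convergence $k_z\to 0$ and the identification $\widetilde{T_f}=P[f]$ (both classical), and the correct invocation of Fatou's theorem. An alternative, purely spectral proof is also available: the Hartman--Wintner theorem gives $\sigma(T_f)=[\operatorname{ess\,inf} f,\operatorname{ess\,sup} f]$, while Coburn's lemma (for $f$ not a.e.\ equal to $\lambda$ one has $\ker T_{f-\lambda}=\{0\}$, using $f=\bar f$) combined with the fact that a self-adjoint Fredholm operator has Fredholm index zero shows $\sigma_{\rm ess}(T_f)=\sigma(T_f)$; then each of $(1)$, $(2)$, $(3)$ is equivalent to $\operatorname{ess\,inf} f\ge 0$. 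I would present the Berezin-transform argument as the primary one, since it uses only material developed earlier in the paper, and merely remark on the spectral alternative.
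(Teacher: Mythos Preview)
Your argument is correct, but it follows a genuinely different path from the paper's. The paper disposes of all three implications purely spectrally: $(iii)\Rightarrow(i)$ by the Brown--Halmos theorem that $\sigma(T_f)$ lies in the convex hull of the essential range of $f$, and $(ii)\Rightarrow(iii)$ by the Hartman--Wintner theorem that the essential range of $f$ is contained in $\sigma_{\rm ess}(T_f)$. Your $(3)\Rightarrow(1)$ is instead a direct quadratic-form computation (more elementary, no spectral input needed), and your $(2)\Rightarrow(3)$ replaces Hartman--Wintner by the Berezin-transform machinery developed earlier: you identify $\widetilde{T_f}$ with the Poisson integral $P[f]$, invoke Proposition~\ref{Berezin1}, and recover $f\ge 0$ a.e.\ via Fatou's theorem. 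The paper's route is shorter and quotes two classical Toeplitz results; yours is more self-contained and has the virtue of actually \emph{using} Proposition~\ref{Berezin1}, which nicely ties this section to the rest of the paper. Your closing remark about the spectral alternative essentially \emph{is} the paper's proof (though the paper does not need Coburn's lemma or the equality $\sigma_{\rm ess}(T_f)=\sigma(T_f)$, only the Hartman--Wintner inclusion).
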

\begin{proof}
Clearly (i) implies (ii). If $f(t)\in [0,\infty)$ for a.e.~$t\in\T$, then, according to a theorem of Brown and Halmos (see, e.g., Theorem~2.33 of~\cite{BS2006}), the spectrum $\sigma(T_f)$ is contained in the convex hull of the essential range of $f$, so $\sigma(T_f)\subset [0,\infty)$. We see that (iii) implies (i).

Finally, if $T_f$ is essentially positive, then, by a theorem of Hartman and Wintner (see, e.g., Theorem 2.30 of \cite{BS2006}), we have $f(t) \in \sigma_{\rm ess}(T_f) \subset [0,\infty)$ for a.e.~$t\in\T$. So (ii) implies (iii).
\end{proof}

\section{Toeplitz operators on the Bergman space}\label{Bergman}

Denote by $H(\D)$ the space of all analytic functions in the unit disk $\D$. The Bergman space $A^2$ is defined by
$$
	A^2 = \left\{ f\in H(\D) : \int_{\D} |f(z)|^2 \, dA(z)<\infty \right\},
$$
where $dA(z) = \frac{dx dy}{\pi}$ with $z=x+iy$ is the normalized area measure. 

Assume now that $\mu$ is a complex-valued Borel measure on $\mathbb{D}$. Then the Toeplitz operator $T_\mu : A^2 \to A^2$ is given by
\begin{equation}\label{e:integral rep}
	T_\mu(g)(z) = \int_{\D} g(w) \overline{K_z(w)}\, d\mu(w) 
	= \int_{\D} \frac{g(w)\, d\mu(w)}{(1-\bar w z)^2},
	\quad z\in \D,
\end{equation}
where $K_z(w) = (1-\bar z w)^{-2}$ is the reproducing kernel for $A^2$. Notice that $T_\mu(g)$ is well-defined for all polynomials $g$ (complex-valued measures $\mu$ are assumed to have finite total variation $|\mu|$). It is easy to see that $T_\mu$ is self-adjoint if and only if $\mu$ is real-valued and that $T_\mu$ is positive if $\mu$ is non-negative. The converse of the latter statement fails---see~\cite{ZZ2014}. As usual, if $\mu$ is given in terms of an $L^1$ function $f$, that is, $d\mu(z)=f(z)dA(z)$, then we write $T_f$ for the corresponding Toeplitz operator.

The Berezin transform $\tilde \mu$ is defined by
$$
	\tilde \mu(z) = \int_{\D} |k_z(w)|^2 d\mu(w), \quad z\in \D.
$$
If $T_\mu$ is bounded on $A^2_\alpha$, then the Berezin transform of $T_\mu$ is given by
\begin{equation}\label{e:Berezin-Toeplitz}
	\widetilde T_\mu(z) = \tilde \mu (z)
\end{equation}
for $z\in \D$ (see page 165 of \cite{Zhu}).

It may be tempting to try to relate positivity of Toeplitz operators $T_\mu$ on the Bergman space to positivity of the Berezin transform of $\mu$, but this is known to fail in general. More precisely, there are functions $f$ such that $\tilde f \ge 0$ but $T_f$ is not positive, while, however, for harmonic symbols $f$, $T_f \ge 0$ if and only if $\tilde f\ge 0$ (see \cite{ZZ2014}). It would be of interest to determine a larger class of functions for which the preceding characterization holds.

We next consider essential positivity. Recall first that, by Proposition~\ref{Berezin1}, 
$$
	T  \gtrsim 0 \implies \liminf_{|z|\to 1} \tilde T(z)\ge 0.
$$
The converse is not true in general; for example, if
$$
	T\left( \sum_{n=0}^\infty a_n z^n \right) = - \sum_{n=0}^\infty a_{2^n} z^{2^n},
$$
then $\tilde T(z) \to 0$ as $|z|\to 1$ (see \cite{AZ1998}), but for any compact operator $K$,
$$
	\langle (T+K) e_{2^n}, e_{2^n}\rangle = -1 + \langle Ke_{2^n}, e_{2^n}\rangle \to -1
$$
where $e_n = z^n/\|z^n\|$. However, we can characterize essentially positive Toeplitz operators with real-valued radial symbols under a mild additional condition. A measure $\mu$ is said to be radial if $\mu(E)=\mu(e^{i\theta}E)$ for all $\theta \in \mathbb{R}$ and Borel sets $E$.

We first recall the following Tauberian theorem (see Theorem I.11.1 of \cite{Korevaar}). Tauberian theorems were used for Toeplitz operators by Korenblum and Zhu in \cite{KZ1995}. Inspired by their work, we will use similar approach to prove the main result of this section. 

\begin{theorem}\label{Tauberian}
Suppose that $\sum_{n=1}^\infty a_n x^n$ converges for $|x|<1$ and let
$$
	f(x) = (1-x) \sum_{n=0}^\infty a_n x^n.
$$
If $f(x) \to a$ as $x\to 1$, then the Tauberian condition $a_n\ge -C$ for some constant $C$ implies that
$$
	\frac{a_0+a_1 + \ldots a_n}{n+1} \to a.
$$
\end{theorem}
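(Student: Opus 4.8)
The plan is to prove this classical one-sided Tauberian theorem by Karamata's method. First I would reduce to a nonnegative sequence: since $(1-x)\sum_{n\ge 0}x^n\equiv 1$, replacing $a_n$ by $a_n+C$ (which we may assume nonnegative) adds the constant $C$ to $f(x)$ and to every average $\frac{1}{n+1}\sum_{k=0}^n a_k$, so it suffices to treat the case $a_n\ge 0$, where the hypothesis reads $f(x)=(1-x)\sum a_n x^n\to a$ as $x\to 1^-$ and the goal is $\frac{1}{n+1}\sum_{k=0}^n a_k\to a$.

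The heart of the argument is to analyse the weighted sums $F_g(x):=(1-x)\sum_{n\ge 0}a_n\,g(x^n)\,x^n$ for $x\in(0,1)$ and bounded $g\colon[0,1]\to\R$. For $g(u)=u^k$ one has
$$
F_{u^k}(x)=(1-x)\sum_{n\ge 0}a_n x^{n(k+1)}=\frac{1-x}{1-x^{k+1}}\Bigl((1-x^{k+1})\sum_{n\ge 0}a_n(x^{k+1})^n\Bigr)\longrightarrow\frac{a}{k+1}=a\int_0^1 u^k\,du
$$
as $x\to 1^-$, by the hypothesis applied with $x^{k+1}$ in place of $x$. By linearity $F_g(x)\to a\int_0^1 g$ for every polynomial $g$, and then for every $g\in C[0,1]$ by Weierstrass approximation: $\|g-P\|_{\infty,[0,1]}<\varepsilon$ gives $|F_g(x)-F_P(x)|\le\varepsilon(1-x)\sum a_n x^n=\varepsilon f(x)$, which stays bounded as $x\to 1^-$. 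This is the only point where $a_n\ge 0$ enters the continuous case.

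Finally I would apply this with the discontinuous weight $g(u)=u^{-1}\mathbf{1}_{[1/e,1]}(u)$, for which $g(x^n)x^n=\mathbf{1}_{\{x^n\ge 1/e\}}$ and hence $F_g(x)=(1-x)\,s_{N(x)}$, where $s_m=a_0+\dots+a_m$ and $N(x)=\lfloor 1/(-\log x)\rfloor$. To legitimise passing to such a $g$, I would sandwich it between continuous functions $g^-\le g\le g^+$ with $\int_0^1(g^+-g^-)<\varepsilon$; since $a_n\ge 0$ this gives $F_{g^-}(x)\le F_g(x)\le F_{g^+}(x)$, whose outer limits $a\int g^{\mp}$ differ by at most $a\varepsilon$, so $F_g(x)\to a\int_0^1 g=a$. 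Since $-\log x\sim 1-x$ forces $(1-x)(N(x)+1)\to 1$, it follows that $\frac{s_{N(x)}}{N(x)+1}\to a$; and as $x\to 1^-$ the integer $N(x)$ runs through every sufficiently large value, which delivers $\frac{1}{n+1}\sum_{k=0}^n a_k\to a$. The genuinely delicate step is the sandwiching of this jump function, and it is precisely there that the one-sided hypothesis $a_n\ge -C$ does its work; the rest is routine bookkeeping.
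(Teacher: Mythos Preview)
Your argument is correct: this is the classical Karamata proof of the one-sided (Hardy--Littlewood) Tauberian theorem, carried out cleanly. The reduction to $a_n\ge 0$, the identity $F_{u^k}(x)\to a\int_0^1 u^k\,du$, the passage to $C[0,1]$ via Weierstrass, and the sandwiching of $g(u)=u^{-1}\mathbf 1_{[1/e,1]}(u)$ (whose integral is $1$) are all in order; the conclusion $(1-x)s_{N(x)}\to a$ together with $(1-x)(N(x)+1)\to 1$ gives exactly what is claimed.

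As for comparison with the paper: there is nothing to compare. The paper does not prove this theorem at all; it merely quotes it as Theorem~I.11.1 of Korevaar's \emph{Tauberian theory} and then applies it in the proof of Theorem~\ref{char1}. Your proof is in fact essentially the one Korevaar gives (Karamata's method), so you have supplied what the paper omits by citation.
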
 

Observe that for a radial measure $\mu$ as discussed above the Toeplitz operator $T_\mu$ is diagonal with respect to the orthogonal monomial basis $(e_n)_{n=0}^\infty$, where $e_n(z)=\sqrt{n+1}z^n$, and the diagonal elements are given by
\begin{equation}\label{e:d}
	\lambda_n = 2(n+1)\int_0^1 r^{2n+1}\,d\mu(r).
\end{equation}
It is clear that the operator $T_\mu$ is bounded if and only if the numbers $\lambda_n$ form a bounded sequence. However, we shall assume something slightly stronger, namely that $|\mu|$ is a Carleson measure. 

Carleson measures have been characterized for all positive Borel measures, but in the case of a radial measure the condition can be simplified. In what follows, we will make use of the standard weighted Bergman spaces $A^2_\alpha$ (so that $A^2=A^2_0$), consisting of analytic functions, which are square-integrable with respect to the weighted measure $(1-|z|^2)^\alpha\, dA$, where $\alpha>-1$. A positive radial Borel measure $\nu$ is a Carleson measure for $A^2_\alpha$, meaning that it satisfies
$$\int_{\mathbb{D}}|P(z)|^2 d\nu(z)\lesssim \int_{\mathbb{D}}|P(z)|^2 (1-|z|^2)^\alpha dA(z)$$ for all polynomials $P$, if and only if
$$\sup_{0\leq r <1} \frac{1}{(1-r^2)^{2+\alpha}}\int_r^1 d\nu(s)<\infty.$$
This result is standard and can be found in \cite{Zhu}, for instance. In that reference it is formulated in terms of disks in Bergman metric---the relation with the formulas above is more evident from the characterization with Carleson boxes; see \cite{PRMem}.

Observe that if $|\mu|$ is a Carleson measure for $A^2$, then it holds that
$$\frac{1}{(1-r^2)^3}\int_r^1 (1-s^2)d|\mu|(s)\leq \frac{1}{(1-r^2)^2}\int_r^1 d|\mu|(s).$$
Indeed, just notice that with $s$ on the interval $[r,1)$ it always holds $(1-s^2)\leq (1-r^2)$. This reasoning shows that $(1-|z|^2)|\mu|$ is a Carleson measure for $A^2_1$.

\begin{theorem}\label{char1}
Let $\mu$ be a radial real-valued Borel measure such that $|\mu|$ is a Carleson measure for $A^2$. Suppose that the limit $L=\lim_{|z|\to 1} \tilde \mu(z)$ exists. Then $T_\mu$ is essentially positive on $A^2$ if and only if $L\ge 0$.
\end{theorem}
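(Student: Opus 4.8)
The plan is to reduce the statement, via Proposition~\ref{canonical}, to the asymptotics of the diagonal coefficients $(\lambda_n)$, and to extract those asymptotics from $\tilde\mu$ by means of the Tauberian theorem (Theorem~\ref{Tauberian}). Since $\mu$ is radial, so is $\tilde\mu$, and it suffices to evaluate it on the segment $[0,1)$. Expanding $|k_x(w)|^2=(1-x^2)^2|1-xw|^{-4}$ with $(1-u)^{-2}=\sum_{k\geq 0}(k+1)u^{k}$ and integrating out the angular variable (the off-diagonal terms vanish because $\mu$ is radial), one obtains, with $t=x^2$,
$$
\tilde\mu(x)=(1-t)^2\sum_{n=0}^{\infty}(n+1)\lambda_n t^{n}.
$$
Because $|\mu|$ is a Carleson measure the sequence $(\lambda_n)$ is bounded, so the series converges for $|t|<1$.

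Now set $a_0=\lambda_0$ and $a_n=(n+1)\lambda_n-n\lambda_{n-1}$ for $n\geq 1$. Then $(1-t)\sum_{n\geq 0}(n+1)\lambda_n t^{n}=\sum_{n\geq 0}a_n t^{n}$, so the identity above reads
$$
(1-t)\sum_{n=0}^{\infty}a_n t^{n}=\tilde\mu(\sqrt t)\longrightarrow L\qquad(t\to 1^{-}),
$$
while the partial sums telescope: $a_0+\cdots+a_n=(n+1)\lambda_n$, so that $(a_0+\cdots+a_n)/(n+1)=\lambda_n$. Hence, \emph{provided} the Tauberian condition $a_n\geq -C$ holds for some constant $C$, Theorem~\ref{Tauberian} gives $\lambda_n\to L$. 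Since Proposition~\ref{canonical} says $T_\mu\gtrsim 0$ if and only if $\liminf_n\lambda_n\geq 0$, and since now $\liminf_n\lambda_n=\lim_n\lambda_n=L$, both directions follow simultaneously: $T_\mu\gtrsim 0\iff L\geq 0$. (The implication $T_\mu\gtrsim 0\Rightarrow L\geq 0$ also follows directly from Proposition~\ref{Berezin1}, applied with $\Omega=\D$ and using that $k_z\to 0$ weakly as $|z|\to 1$ in $A^2$, but the argument above gives it for free.)

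The crux is therefore to verify $a_n\geq -C$ from the Carleson hypothesis on $|\mu|$; I would in fact prove the two-sided bound $|a_n|\leq C$. From $\lambda_n=2(n+1)\int_0^1 r^{2n+1}\,d\mu(r)$,
$$
a_n=2\int_0^1 r^{2n-1}\bigl((n+1)^2r^2-n^2\bigr)\,d\mu(r),
$$
so $|a_n|\leq 2\int_0^1 r^{2n-1}\,|(n+1)^2r^2-n^2|\,d|\mu|(r)$, and I would split this integral at $r_0=n/(n+1)$. On $[r_0,1)$ one has $r^{2n-1}\leq 1$ and $0\leq (n+1)^2r^2-n^2\leq 2n+1$, so that part is at most $(2n+1)\,|\mu|([r_0,1))$, which is $O(1)$ since $1-r_0=1/(n+1)$ and, by the Carleson hypothesis, $|\mu|([r,1))\leq C(1-r)$. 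On $[0,r_0)$ the naive bound $n^2-(n+1)^2r^2\leq n^2$ would only give $O(n)$; instead one uses $0\leq n^2-(n+1)^2r^2\leq n^2(1-r^2)\leq 2n^2(1-r)$, so that part is at most a fixed multiple of $n^2\int_0^1 r^{2n-1}(1-r)\,d|\mu|(r)$, and an integration by parts against $r\mapsto|\mu|([r,1))$ together with the Carleson bound (leading to a routine Beta-type estimate) shows $\int_0^1 r^{2n-1}(1-r)\,d|\mu|(r)=O(n^{-2})$, so this part is $O(1)$ as well. This last estimate is the only point where the full force of the Carleson assumption is used, rather than mere boundedness of $(\lambda_n)$, and it is where the bookkeeping has to be carried out with care; everything else in the argument is formal.
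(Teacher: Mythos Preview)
Your proof is correct and follows essentially the same route as the paper's: diagonalize $T_\mu$, express $\tilde\mu$ as $(1-t)^2\sum(n+1)\lambda_n t^n$, set $a_n=(n+1)\lambda_n-n\lambda_{n-1}$, verify the Tauberian side condition, and conclude via Theorem~\ref{Tauberian} and Proposition~\ref{canonical}. The only cosmetic difference is in checking that $(a_n)$ is bounded: the paper uses the global identity $(n+1)^2r^2-n^2=-n^2(1-r^2)+(2n+1)r^2$ together with the observation that $(1-|z|^2)\,|\mu|$ is a Carleson measure for $A^2_1$, whereas you split at $r_0=n/(n+1)$ and integrate by parts against the tail $|\mu|([r,1))$; both lead to the same Beta-type estimate (note, incidentally, that the radial $A^2$-Carleson condition actually gives $|\mu|([r,1))\lesssim (1-r)^2$, not $(1-r)$, which only makes your bound easier).
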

\begin{proof}
As above, we write
$$
	T_\mu(g) = \sum_{n=0}^\infty \lambda_n\langle g, e_n\rangle e_n,
$$
where $\lambda_n$ are given by \eqref{e:d}. A direct computation shows that
\begin{align*}
	\tilde\mu(z) &= 2(1-|z|^2)^2 \sum_{n=0}^\infty (n+1)\lambda_n |z|^{2n}\\
	&= 2(1-|z|^2) 
	\left( \lambda_0 + \sum_{n=1}^\infty \big((n+1)\lambda_n-n\lambda_{n-1}\big)|z|^{2n}\right).
\end{align*}
The sequence of numbers $(n+1)\lambda_n - n\lambda_{n-1}$ with $n\ge 1$ is bounded. Indeed,
\begin{align*}
(n+1)\lambda_n - n\lambda_{n-1}&=-n^2\int_{\mathbb{D}}|z|^{2n-2}(1-|z|^2)d|\mu|(z)\\
&\qquad+(2n+1)\int_{\mathbb{D}}|z|^{2n}d|\mu|(z).
\end{align*}
The first term of the sum is bounded in $n$, because $(1-|z|^2)|\mu|$ is a Carleson measure for $A^2_1$, for then
\begin{align*}
n^2\int_{\mathbb{D}}|z|^{2n-2}(1-|z|^2)d|\mu|(z)&\lesssim n^2\int_{\mathbb{D}}|z|^{2n-2}(1-|z|^2)dA(z)\\
&=2n^2\int_0^1 r^{2n-1}(1-r^2)dr\\
&=2n^2\left(\frac{1}{2n}-\frac{1}{2n+2}\right)\\
&=\frac{4n^2}{4n^2+4n}.
\end{align*}

The boundedness of the second term follows from the assumption that $|\mu|$ is a Carleson measure for $A^2$ by a similar but easier calculation.

Observe now that if $a_0=\lambda_0$ and $a_n=(n+1)\lambda_n-n\lambda_{n-1}$ for $n\geq 1$, then we have
$$\frac{a_0+a_1+...+a_n}{n+1}=\lambda_n$$ for $n\ge 0$.

By Theorem~\ref{Tauberian}, 
$$
	\lim_{t\to 1} (1-t)^2 \sum_{n=0}^\infty (n+1)\lambda_n t^n = L
$$
if and only if $\lambda_n \to L$. Now Proposition~\ref{canonical} shows that the condition $L\ge 0$ is sufficient for $T_\mu$ to be essentially positive. That the condition is also necessary follows from \eqref{e:Berezin-Toeplitz} and Proposition~\ref{Berezin1}.
\end{proof}

The following corollary follows immediately from the preceding theorem. 

\begin{corollary}\label{cor1}
Let $f\in L^\infty(\D)$ be real and radial, and suppose that the limit $L=\lim_{|z|\to 1} \tilde f(z)$ exists. Then $T_f$ is essentially positive on $A^2$ if and only if $L\ge 0$.
\end{corollary}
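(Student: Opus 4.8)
The plan is to obtain this as an immediate consequence of Theorem~\ref{char1}. By the convention fixed above, $T_f = T_\mu$ for the Borel measure $d\mu(z) = f(z)\,dA(z)$, which has finite total variation since $\int_\D |f|\,dA \le \|f\|_\infty$. So it suffices to check that $\mu$ satisfies the three hypotheses of Theorem~\ref{char1}: it is radial, it is real-valued, and $|\mu|$ is a Carleson measure for $A^2$.

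The first two requirements are immediate: $\mu$ is real-valued because $f$ is real, and $\mu$ is radial --- that is, $\mu(E) = \mu(e^{i\theta}E)$ for every Borel set $E$ and every $\theta\in\R$ --- because $f(z)$ depends only on $|z|$ and $dA$ is rotation invariant. For the third, the total variation is $d|\mu|(z) = |f(z)|\,dA(z)$ with $|f(z)| \le \|f\|_\infty$ for almost every $z\in\D$, so for every polynomial $P$,
$$
\int_\D |P(z)|^2\,d|\mu|(z) \le \|f\|_\infty \int_\D |P(z)|^2\,dA(z),
$$
which is precisely the defining inequality for $|\mu|$ being a Carleson measure for $A^2 = A^2_0$.

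It then remains only to note that the Berezin transforms agree, $\tilde f = \widetilde{T_f} = \widetilde{T_\mu} = \tilde\mu$, so the standing hypothesis that $L = \lim_{|z|\to 1}\tilde f(z)$ exists is exactly the hypothesis of Theorem~\ref{char1} on $\tilde\mu$. That theorem then yields that $T_f$ is essentially positive on $A^2$ if and only if $L\ge 0$, which is the claim. There is no real obstacle here; the only things to keep straight are the bookkeeping between symbols and measures and the (trivial) fact that a bounded density automatically produces a Carleson measure, so no separate estimate of the type appearing in the proof of Theorem~\ref{char1} is needed.
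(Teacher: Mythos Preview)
Your proof is correct and follows exactly the approach the paper intends: the corollary is stated to follow immediately from Theorem~\ref{char1}, and you have simply supplied the routine verifications (radiality, real-valuedness, and the Carleson condition for $|\mu|$ via the bound $|f|\le\|f\|_\infty$) that make this immediate.
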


\begin{remark}
We note that alternatively Corollary \ref{cor1} follows easily from Theorem 2.2 of Axler and Zheng~\cite{AZ1998}, which states that, for $f\in L^\infty(\D)$, the Toeplitz operator $T_f$ is compact if and only if $\bar f(z) \to 0$ as $|z|\to 1$. This theorem has also been extended by Su\'arez \cite{Sua} to characterize compactness of all operators in the Toeplitz algebra. 

After the first version of this paper appeared on arXiv, we were contacted by Daniel Su\'arez and Nina Zorboska, who drew our attention to their articles \cite{SuaCrelle} and \cite{Zor}. Both of these papers imply that for $\mu$ whose total variation is a Carleson measure, compactness of $T_\mu$ is characterized by the vanishing of the Berezin transform on the boundary. This leads to an easier proof of Theorem \ref{char1} for all such measures whose Berezin transforms have continuous boundary values. The authors want to express their gratitude to Professors Su\'arez and Zorboska for bringing these results to their attention.
\end{remark}

The virtue of the proof of Theorem~\ref{char1} is that it suggests the following conjecture, which is in fact similar in spirit to a compactness characterization for such symbols in \cite{KZ1995}.

\begin{conjecture}\label{conjecture}
Let $f\in L^\infty(\D)$ be real valued and radial. Then $T_f : A^2 \to A^2$ is essentially positive if and only if
\begin{equation}\label{e:conjecture}
	\liminf_{|z|\to 1}\tilde f(z) \ge 0.
\end{equation}
\end{conjecture}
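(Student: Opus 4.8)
The plan is to prove the only substantive half of the equivalence, namely that \eqref{e:conjecture} forces $T_f$ to be essentially positive; the converse is Proposition~\ref{Berezin1} with $\Omega=\D$. Write $\lambda_n$ for the diagonal entries of $T_f$ in the monomial basis and set $\psi(u)=f(\sqrt u)$, so that $\lambda_n=(n+1)\int_0^1 u^n\psi(u)\,du$, $\|\psi\|_\infty=\|f\|_\infty=:M$, and $\tilde f(z)=\int_0^1 P_t(v)\psi(v)\,dv$ where $t=|z|^2$ and $P_t(v)=(1-t)^2\tfrac{1+tv}{(1-tv)^3}$ is a probability density concentrated near $v=1$ at scale $1-t$ (a direct computation gives $\int_0^1 P_t=1$ and recovers the series expression in the proof of Theorem~\ref{char1}). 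Since $T_f$ is diagonal, Proposition~\ref{canonical} reduces the target to a statement about the sequence alone: if $\liminf_{|z|\to1}\tilde f(z)\ge0$ then $\liminf_n\lambda_n\ge0$. One argues by contraposition, so suppose $\lambda_{n_k}\to\ell<0$ along a subsequence and put $\delta=-\ell/2>0$.

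To prove the sequence statement one would imitate the Tauberian comparison behind Corollary~\ref{cor1}, but now in a one-sided (liminf) form: both $\lambda_n$ and $\tilde f(z)$ are averages of $\psi$ against probability kernels concentrated near $\T$ at the comparable scales $1/n$ and $1-|z|^2$ (the kernel $(n+1)u^n\,du$ for $\lambda_n$, the Poisson-type kernel $P_t$ for $\tilde f$). The three steps would be: (i) from $\lambda_{n_k}\le-\delta$ and $\|\psi\|_\infty\le M$ deduce that, in the annulus $\{\,1-c/n_k\le|w|^2<1\,\}$ for a suitable fixed $c$, the set where $\psi\le-\delta/2$ has relative Lebesgue measure at least a constant $\rho_0=\rho_0(\delta,M)>0$; (ii) choose $z_k$ with $1-|z_k|^2$ comparable to $1/n_k$ so that $P_{|z_k|^2}$ places as much mass as possible on this set; (iii) conclude $\tilde f(z_k)\le-\varepsilon_0<0$ for a fixed $\varepsilon_0$, contradicting \eqref{e:conjecture}. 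Step (i) is a short extremal estimate for the probability measure $(n_k+1)u^{n_k}\,du$; the heart of the matter is passing from ``a negative portion of $\psi$ at scale $1/n_k$'' for one kernel to a genuinely negative value of the average against the \emph{other} kernel.

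This is where I expect the plan to fail, and I am not convinced steps (ii)--(iii) can be carried out: the two kernels concentrate differently on a thin annulus that stays away from $\T$. A short computation shows that a bump of $\psi$ of width $w$ at distance $d$ from the circle (with $w$ small relative to $d$) can receive peak mass $\sim w/(ed)$ from $(n+1)u^n\,du$ but only $\sim\tfrac{8}{27}\,w/d$ from $P_t$, and $\tfrac{8}{27}<\tfrac1e$. This gap indicates that $\liminf_{|z|\to1}\tilde f(z)$ is strictly weaker than $\liminf_n\lambda_n$: one should be able to build a radial $\psi$ equal to a small positive constant $\eta$ off a sparse family of such negative bumps at scales $d_k\to0$ (where $\psi=-M$), with the widths tuned so that $\liminf_n\lambda_n<0$ while $\liminf_{|z|\to1}\tilde f(z)\ge0$. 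So before investing in a proof I would first try to make this construction rigorous; if it survives, the conjecture is false as stated and holds only under the stronger hypothesis of Corollary~\ref{cor1} (existence of the boundary limit), or after replacing the $\liminf$ of $\tilde f$ by a convergence-in-density type condition.
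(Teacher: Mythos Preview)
The statement you are addressing is labelled \emph{Conjecture}~\ref{conjecture} in the paper: the authors do not prove it, and there is no argument in the paper to compare your proposal against. What the paper does establish is the weaker Corollary~\ref{cor1}, where the full limit $L=\lim_{|z|\to1}\tilde f(z)$ is assumed to exist; that is precisely the hypothesis under which the two-sided Tauberian Theorem~\ref{Tauberian} applies and yields $\lambda_n\to L$. The $\liminf$ version is then explicitly posed as open.

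Your reductions are correct. The forward implication is indeed Proposition~\ref{Berezin1}, and for the reverse, since $T_f$ is diagonal in the monomial basis, Proposition~\ref{canonical} reduces the problem to the purely sequential question of whether
\[
\liminf_{t\to1}(1-t)^2\sum_{n\ge0}(n+1)\lambda_n t^n\ge0
\quad\Longrightarrow\quad
\liminf_{n}\lambda_n\ge0,
\]
under the side condition that $\lambda_n=(n+1)\int_0^1 u^n\psi(u)\,du$ with $\psi\in L^\infty$. Your kernel computation is also right: a thin bump of width $w$ at distance $d$ from the boundary receives peak mass $\sim w/(ed)$ from $(n+1)u^n\,du$ but only $\sim 8w/(27d)$ from $P_t$, and $8/27<1/e$. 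This is a genuine obstruction to the naive kernel-comparison argument and legitimate evidence that the one-sided Tauberian implication may fail.

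That said, your sketch is not yet a disproof. To promote the heuristic to a counterexample you must control $\tilde f(z)$ for \emph{every} $z$ near $\T$, not only at the scale matched to a given bump, and you must check that a sparse family of bumps does not accumulate enough negative mass at intermediate scales to drive $\tilde f$ below zero there; the constants $1/e$ and $8/27$ are peak values, and the full profiles of the two kernels overlap substantially. Conversely, if you wish to salvage the conjecture, note that the boundedness of $\psi$ is exactly the Tauberian side condition in Theorem~\ref{Tauberian}; the right question to settle first is whether a one-sided (Hardy--Littlewood type) version of that Abel-to-Ces\`aro theorem holds, which the paper does not address. Either outcome would resolve what the authors left open.
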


In \cite{ZZ2014} Zhao and Zheng gave examples of symbols of the form $f(z)=|z|^2+a|z|+b$, for which the Berezin transform is positive, while the corresponding Toeplitz operator is not. These examples still obey the conjecture above. In fact, it is immediate that the conjecture is satisfied by all symbols in either the Douglas algebra $C(\overline{\D})+H^\infty(\D)$ or in $L^\infty(\D)\cap \mathrm{VMO}(\D)$, because in these cases the essential spectra can be described in terms of the Berezin transform, see \cite{McDonald, ZhuVMO}. This suggest that Conjecture~\ref{conjecture} may hold true also for non-radial symbols.

\begin{remark}
Naturally a similar theory can be developed for Toeplitz operators $T_f$ acting on the Fock space of the complex plane and it is indeed easy to extend Corollary~\ref{cor1} to this setting. However, we are not aware of a result, such as Theorem~\ref{Tauberian}, in the planar setting that could be used to prove the Fock space version of Theorem~\ref{char1}. We still assume that Conjecture 1 with obvious modifications is true also in the Fock space setting.
\end{remark}

\bibliographystyle{amsalpha}

\end{document}